\newtheorem{theorem}{Theorem}[section]
\theoremstyle{plain}
\newtheorem{lemma}{Lemma}[section]
\newtheorem{proposition}{Proposition}[section]
\numberwithin{equation}{section}
\begin{document}
\title[A Liouville Type Theorem]{A Liouville type theorem for special Lagrangian Equations with constraints}
\author{Micah Warren}
\author{Yu YUAN}
\address{Department of Mathematics, Box 354350\\
University of Washington\\
Seattle, WA 98195}
\email{mwarren@math.washington.edu, yuan@math.washington.edu}
\thanks{Y.Y. is partially supported by an NSF grant.}
\date{\today}

\begin{abstract}
We derive a Liouville type result for special Lagrangian equations with
certain ``convexity'' and restricted linear growth assumptions on the solutions.

\end{abstract}
\maketitle

\section{\bigskip Introduction}

In this note, we show the following

\begin{theorem}
Let $u$ be a smooth solution to the special Lagrangian equation
\begin{equation}
\sum_{i=1}^{n}\arctan\lambda_{i}=c\ \ \ \text{on \ \ }\mathbb{R}^{n},
\label{EsLag}%
\end{equation}
where $\lambda_{i}s$ are the eigenvalues of the Hessian $D^{2}u\left(
x\right)  .$ Suppose that
\begin{equation}
3+(1-\varepsilon)\lambda_{i}^{2}\left(  x\right)  +2\lambda_{i}\left(
x\right)  \lambda_{j}\left(  x\right)  \geq0\quad\label{Cond}%
\end{equation}
for all $i,\ j,\ x$ and any small fixed $\varepsilon>0;$ and the gradient
$\nabla u(x)$ satisfies
\begin{equation}
|\nabla u(x)|\leq\delta\left(  n\right)  |x| \label{Unpleasant}%
\end{equation}
for large $\left|  x\right|  $ and any fixed $\delta\left(  n\right)
<1/\sqrt{n-1}.$ Then $u$ must be a quadratic polynomial.
\end{theorem}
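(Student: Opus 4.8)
The plan is to recast \eqref{EsLag} as a minimal-surface problem, extract from \eqref{Cond} a Jacobi-type differential inequality on the Lagrangian graph, use \eqref{Unpleasant} to supply the volume and oscillation bounds needed to close a Hessian estimate, and finish by a blow-down rigidity argument.

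So first I pass to the Lagrangian graph $M=\{(x,\nabla u(x)):x\in\mathbb{R}^{n}\}\subset\mathbb{R}^{2n}$. Because $u$ solves \eqref{EsLag} with constant phase $c$, $M$ is special Lagrangian, hence calibrated --- in particular area-minimizing and minimal. Its induced metric is $g=I+(D^{2}u)^{2}$, the volume element is $V=\sqrt{\det g}=\prod_{i}\sqrt{1+\lambda_{i}^{2}}$, and I set $b=\ln V\ge 0$. The ambient coordinate functions $x_{i}$ and the gradient components $u_{i}=\partial_{i}u$ are all $g$-harmonic, and --- this is the crucial computational input, going back to Yuan --- in a frame diagonalizing $D^{2}u$ one has $(\nabla_{g}b)_{k}=\sum_{i}\lambda_{i}h_{iik}$ and $\Delta_{g}b=\sum_{i,j,k}(1+\lambda_{i}\lambda_{j})h_{ijk}^{2}=|A|^{2}+\sum_{i,j,k}\lambda_{i}\lambda_{j}h_{ijk}^{2}$, where $h_{ijk}$ is the totally symmetric second fundamental form of $M$ (proportional to the normalized third derivatives of $u$) and $|A|^{2}=\sum_{i,j,k}h_{ijk}^{2}$.

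Next comes the Jacobi inequality, which is where \eqref{Cond} enters essentially. After symmetrizing, $\Delta_{g}b=\tfrac13\sum_{i,j,k}(3+\lambda_{i}\lambda_{j}+\lambda_{j}\lambda_{k}+\lambda_{k}\lambda_{i})h_{ijk}^{2}$; collecting the orderings of a term $h_{iik}$ with one repeated index ($i\ne k$) produces exactly the coefficient $3+\lambda_{i}^{2}+2\lambda_{i}\lambda_{k}$, which \eqref{Cond} makes $\ge\varepsilon\lambda_{i}^{2}\ge0$, while a short sign-by-sign check --- again using \eqref{Cond}, now for both orderings of each pair --- shows the all-distinct coefficients $3+\lambda_{i}\lambda_{j}+\lambda_{j}\lambda_{k}+\lambda_{k}\lambda_{i}$ are nonnegative too. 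Since $|\nabla_{g}b|^{2}=\sum_{k}\bigl(\sum_{i}\lambda_{i}h_{iik}\bigr)^{2}$ involves only the repeated-index components, Cauchy--Schwarz then yields $\Delta_{g}b\ge c_{0}(n,\varepsilon)\,|\nabla_{g}b|^{2}$ with $c_{0}>0$, so $V^{c_{0}}=e^{c_{0}b}$ is a subharmonic function on $M$. It is the $\varepsilon$-gap in \eqref{Cond} that makes $c_{0}$ strictly positive; indeed \eqref{Cond} is precisely the hypothesis ruling out pairs of large, nearly opposite eigenvalues, which would otherwise destroy the inequality.

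Now I bring in \eqref{Unpleasant}. From $|\nabla u|\le\delta(n)|x|$ one gets $\mathrm{osc}_{B_{R}}u=O(\delta(n)R^{2})$, and comparing the graph of $\nabla u$ over $B_{R}$ with a flat disk capped by a short lateral cylinder, together with area-minimality, gives Euclidean volume growth $\mathrm{Vol}(M\cap B^{2n}_{R})=\int_{\{|x|^{2}+|\nabla u|^{2}\le R^{2}\}}V\,dx\le C(n)R^{n}$. Feeding the subharmonicity of $V^{c_{0}}$, this volume growth, and the oscillation bound into the Michael--Simon Sobolev inequality plus a Moser iteration on the minimal $M$ --- that is, running the Hessian-estimate scheme for special Lagrangian equations, which is available here precisely because of the Jacobi inequality --- produces an interior estimate $|D^{2}u(x_{0})|\le C(n)\,F\!\bigl(\mathrm{osc}_{B_{R}(x_{0})}u/R^{2}\bigr)$; since $\mathrm{osc}_{B_{R}(x_{0})}u/R^{2}\le C\delta(n)$ uniformly in $x_{0}$, this becomes a global bound $|D^{2}u|\le C(n,\delta)$. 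I expect this step --- and in particular seeing why the precise rate $\delta(n)<1/\sqrt{n-1}$ is what makes the iteration close --- to be the main obstacle. Finally, with $D^{2}u$ globally bounded \eqref{EsLag} is uniformly elliptic and $u$ real-analytic, so the rescalings $u_{R}(x)=[u(Rx)-u(0)-\nabla u(0)\cdot Rx]/R^{2}$ are bounded in $C^{2}_{\mathrm{loc}}$ with $|\nabla u_{R}|\le\delta(n)|x|+o(1)$ and subconverge in $C^{\infty}_{\mathrm{loc}}$ to an entire $u_{\infty}$; the graph of $\nabla u_{\infty}$ is a blow-down of the area-minimizing $M$, hence a cone, so $\nabla u_{\infty}$ is $1$-homogeneous, and a $2$-homogeneous function with bounded Hessian (its Hessian is $0$-homogeneous and continuous at the origin, hence constant) is a quadratic polynomial. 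Thus the tangent cone at infinity of $M$ is a Lagrangian plane, $\Theta_{M}(\infty)=1$, and the monotonicity formula forces $\Theta_{M}\equiv1$, so $M$ is a plane and $u$ a quadratic polynomial. (Alternatively, once $D^{2}u$ is globally bounded one may instead invoke a Bernstein theorem for entire minimal graphs of bounded slope.)
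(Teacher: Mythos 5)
Your reduction to the minimal graph and your Jacobi inequality $\Delta_{g}\ln V\geq c_{0}(n,\varepsilon)|\nabla_{g}\ln V|_{g}^{2}$ match the paper's Lemma 2.1 almost exactly, including the way \eqref{Cond} is fed into the symmetrized Simons formula and the Cauchy--Schwarz step on the repeated-index components. But the two load-bearing steps after that both have genuine gaps. First, the Hessian estimate: you propose Michael--Simon plus Moser iteration and concede you do not see how the threshold $\delta(n)<1/\sqrt{n-1}$ closes the argument --- and indeed that integral scheme is not where the threshold lives. The paper's route is a pointwise Bernstein--Pogorelov--Korevaar argument: after adding a linear function to make $|\nabla v(0)|\geq\alpha$, one maximizes $w=\left[|\nabla v|^{2}-(\alpha+\delta)^{2}|x|^{2}\right]^{+}V^{\varepsilon/n}$ over $B_{1}$; at an interior maximum the Jacobi inequality \eqref{Jac1} kills the cross and volume terms, leaving $0\geq\Delta_{g}\eta=2\sum_{i}\frac{\lambda_{i}^{2}-(\alpha+\delta)^{2}}{1+\lambda_{i}^{2}}$, and the largest eigenvalue is then controlled precisely because $1-(n-1)(\alpha+\delta)^{2}>0$, i.e.\ because $\alpha+\delta<1/\sqrt{n-1}$. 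That choice of cutoff $\eta$ is the missing idea; without it (or a substitute) your step three is an announcement, not a proof.

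Second, the blow-down: you claim that once $|D^{2}u|\leq K$ the rescalings subconverge in $C^{\infty}_{\mathrm{loc}}$, and you deduce flatness from the resulting smoothness of $u_{\infty}$ at the origin. This fails: a bounded Hessian makes \eqref{EsLag} uniformly elliptic but not convex or concave, so Evans--Krylov gives no interior $C^{2,\alpha}$ estimate and no $C^{\infty}_{\mathrm{loc}}$ compactness. (If such compactness were available, the convexity hypothesis \eqref{Cond} would play no role in the blow-down at all, which should be a warning sign.) The paper only gets $C^{1,\alpha}$ convergence, then uses the $W^{2,\delta}$ estimates of Caffarelli--Cabr\'e on $u_{k}-u_{R}$ to extract a subsequence whose Hessians converge almost everywhere, exactly so that the nonlinear constraint \eqref{Cond} survives to the limit cone. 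The limit is then only a $C^{1,1}$ homogeneous order-$2$ viscosity solution, and its rigidity is a separate theorem (the paper's Lemma 2.3, proved via the strict Simons inequality and the strong maximum principle on the link), applied after a dimension-reduction/Allard argument shows the cone is smooth away from the vertex. Your parenthetical ``Hessian is $0$-homogeneous and continuous at the origin, hence constant'' assumes the continuity at the origin that is precisely the conclusion to be proved.
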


The special Lagrangian equation (\ref{EsLag}) arises in the calibrated
geometry [HL]. A Lagrangian graph $M=\left(  x,\nabla u\left(  x\right)
\right)  \subset\mathbb{C}^{n}=\mathbb{R}^{n}\times\mathbb{R}^{n}$ is called
\emph{special }when the calibrating $n$-form
\[
\Omega_{c}=\operatorname{Re}(e^{-\sqrt{-1}\,c}dz_{1}\wedge dz_{2}%
\wedge...\,\wedge dz_{n})
\]
is equal to the induced volume form along $M;$ equivalently, $u$ satisfies
(\ref{EsLag}). The equation (1.1)\ holds if and only if the gradient graph
$\left(  x,\nabla u\left(  x\right)  \right)  \subset\mathbb{C}^{n}$ is a
(volume minimizing) minimal surface in $\mathbb{R}^{n}\times\mathbb{R}^{n}$
[HL, Theorem 2.3, Proposition 2.17].

By Fu's classification result [F], any global solution to (\ref{EsLag}) on
$\mathbb{R}^{2}$ is either quadratic or harmonic; a harmonic function with any
linear growth condition on the gradient is certainly quadratic; see also [Y3]
for a uniqueness result for the global solutions to (\ref{EsLag}) with
$\left\vert c\right\vert >\left(  n-2\right)  \frac{\pi}{2}$. In the case
$n=3,$ other Liouville-Bernstein type results hold true for (\ref{EsLag})
under the following conditions respectively: $\lambda_{i}\geq-K$ [Y2];
$\lambda_{i}\lambda_{j}\geq-K$ [Y4]; or\ $c=\pi$ and the solution is strictly
convex with quadratic growth [BCGJ]. While boundedness of the Hessian alone is
sufficient in dimension three, certain boundedness and convexity are both
needed for Liouville-Bernstein type results to be valid for (\ref{EsLag}) in
the general dimension ($n\geq4$). The results hold with the assumptions that
$c=k\pi$ and the solution is convex with linear growth [B]; with the almost
convex assumption $\lambda_{i}\geq-\varepsilon\left(  n\right)  $ [Y2]; with
the semi-convex assumption $\lambda_{i}\geq-\frac{1}{\sqrt{3}}+\gamma$
everywhere, or with the (\textquotedblleft equivalent\textquotedblright)
assumption $\left\vert \lambda_{i}\right\vert \leq\sqrt{3}-\gamma^{\prime}$
everywhere [Y4]; or with the assumption $\lambda_{i}\lambda_{j}\geq
-1-\varepsilon\left(  n\right)  \ $[Y4]. (It is straightforward that any
convex solution with a bounded Hessian to (\ref{EsLag}) is a quadratic
polynomial, by the well-known $C^{\alpha}$ Hessian estimate of Krylov-Evans
for now the convex elliptic equation (\ref{EsLag}); see also [X, p. 217--218]
for a different approach via the iteration argument of [HJW].) A
Liouville-Bernstein type result with the assumption $\left\vert \lambda
_{i}\right\vert \leq K$ and $\lambda_{i}\lambda_{j}\geq const>-\frac{3}{2}$
was stated in [TW].

The more general ``convexity''\ condition (\ref{Cond}) does not alone lead to
any Hessian bound for the solutions to (\ref{EsLag}), but does guarantees that
the volume element $V,$ which is a geometric combination of the eigenvalues,
is subharmonic. Better yet, the Laplacian of $V$ bounds its gradient; see
Lemma 2.1, which is a key piece in our proof of Lemma 2.2 on our Hessian estimates.

In fact, this paper grows out of our attempts towards deriving a Hessian
estimate in terms of the gradient, for solutions to the special Lagrangian
equation (\ref{EsLag}). The unpleasant technical assumption $\delta\left(
n\right)  <1/\sqrt{n-1}$ in (\ref{Unpleasant}) reflects the limitation of our
current arguments; the assumption is necessary for us to push the
Bernstein-Pogorelov-Korevaar technique to obtain a Hessian estimate for
special Lagrangian equations; see Lemma 2.2.

Once a Hessian bound for solutions to (\ref{EsLag}) is available, the
\textquotedblleft standard\textquotedblright\ blow-down process from the
geometric measure theory will show that the global solution is a quadratic
polynomial, provided certain convexity conditions like (\ref{Cond}) or others
are available in the \emph{whole} process (for $n\geq4).$ (Unlike [JX], we
could not generalize the iteration argument in [HJW] to get a Liouville type
result for now the larger image set (\ref{Cond}) of the corresponding harmonic
Gauss map to the Lagrangian Grassmanian.) The simple constraints $\left\vert
\lambda_{i}\right\vert \leq K$ like $\left\vert \lambda_{i}\right\vert \leq1$
or $\left\vert \lambda_{i}\right\vert \leq\sqrt{3}-\gamma$ are easily shown to
be available in the blow-down process. An \emph{extra} effort is needed to
justify that the nonlinear  constraints (\ref{Cond}) or others like
$\lambda_{i}\lambda_{j}\geq const$ are preserved under the $C^{1,\alpha}$
convergence of the scaling process $u_{k}\left(  x\right)  =u\left(
k^{2}x\right)  /k^{2}.$ Taking advantage of the single elliptic equation
(\ref{EsLag}), we apply the $W^{2,\delta}$ estimates for solutions in terms of
the supreme norm of the solution to extract a $W^{2,\delta}$ sub-convergent
sequence, as in [Y1]. Then we extract another subsequence with the Hessians
converging almost everywhere. This justifies that the constraints (\ref{Cond})
are preserved in the above blow-down process. \ Another route of the
justification is through Allard's regularity result (cf. [S, Section 36]).

Actually, Theorem 1.1 holds true for\ $n=3$ without any growth condition like
(\ref{Unpleasant})$.$ The condition (\ref{Cond}) implies $\lambda_{i}%
\lambda_{j}\geq-K,$ so as in [Y4] we can find a bound on the Hessian (possibly
for a new potential), and then draw the conclusion. Note that the boundedness
on the Hessian alone for $n=3$ is enough for one to run the blow-down process
to obtain a Liouville type result; see [F-C, Theorem 5.4] of Fischer-Colbrie.
In general dimension $n\geq4,$ we derive yet another Liouville-Bernstein type
result for the solutions to (\ref{EsLag}) with the bounded Hessian satisfying
weaker constraints (\ref{W-Cond}); see Theorem 3.1 in the appendix. One
consequence of Theorem 3.1 coupled with the De Giorgi-Allard $\varepsilon
$-regularity theory is an improvement of the above mentioned
Liouville-Bernstein type result in [Y4], namely, any global solution to
(\ref{EsLag}) with $\lambda_{i}\geq-\frac{1}{\sqrt{3}}-\varepsilon\left(
n\right)  $ everywhere or $\left|  \lambda_{i}\right|  \leq\sqrt
{3}+\varepsilon^{\prime}\left(  n\right)  $ everywhere is a quadratic
polynomial (for $n\geq4)$. The argument is identical to the one in [Y2] with
Proposition 2.1 there replaced by Proposition 3.1 here.

The desired Hessian estimate for special Lagrangian equations in the two
dimensional case follows from the gradient estimates in terms of the heights
of the two dimensional minimal graphs with any codimension by Gregori [G],
where some Jacobian estimates of Heinz were employed. For higher dimensional
and codimensional minimal graphs with the assumption that the product of any
two slopes is between $-1$ and $1$, the gradient estimates were obtained in
[W], using an integral method developed for codimension one minimal graphs.
The gradient estimate for codimension one minimal graphs is by now a classical result.

The general Hessian estimate for special Lagrangian equations is still a
puzzling issue to us.

\textbf{Notation.} $\partial_{i}=\frac{\partial}{\partial x_{i}}%
,\ \partial_{ij}=\frac{\partial^{2}}{\partial x_{i}\partial x_{j}}%
,\ u_{i}=\partial_{i}u,\ u_{ji}=\partial_{ij}u,$ etc.

\section{Proof Of Theorem 1.1}

Taking the gradient of both sides of the special Lagrangian equation
(\ref{EsLag}), we have
\begin{equation}
\sum_{i,j}^{n}g^{ij}\partial_{ij}\left(  x,\nabla u\left(  x\right)  \right)
=0, \label{Emin}%
\end{equation}
where $\left(  g^{ij}\right)  $ is the inverse of the induced metric
$g=\left(  g_{ij}\right)  =I+D^{2}uD^{2}u$ on the surface $\left(  x,\nabla
u\left(  x\right)  \right)  \subset\mathbb{R}^{n}\times\mathbb{R}^{n}.$ Simple
geometric manipulation of (\ref{Emin}) yields the usual form of the minimal
surface equation
\[
\bigtriangleup_{g}\left(  x,\nabla u\left(  x\right)  \right)  =0,
\]
where the Laplace-Beltrami operator of the metric $g$ is given by
\[
\bigtriangleup_{g}=\frac{1}{\sqrt{\det g}}\sum_{i,j}^{n}\partial_{i}\left(
\sqrt{\det g}g^{ij}\partial_{j}\right)  .
\]
Because we are using harmonic coordinates $\bigtriangleup_{g}x=0,$ we see that
$\bigtriangleup_{g}$ also equals the linearized operator of the special
Lagrangian equation (\ref{EsLag}) at $u,$%
\[
\bigtriangleup_{g}=\sum_{i,j}^{n}g^{ij}\partial_{ij}.
\]
The gradient and inner product with respect to the metric $g$ are
\begin{align*}
\nabla_{g}v  &  =\left(  \sum_{k=1}^{n}g^{1k}v_{k},\cdots,\sum_{k=1}^{n}%
g^{nk}v_{k}\right) \\
\left\langle \nabla_{g}v,\nabla_{g}w\right\rangle _{g}  &  =\sum_{i,j=1}%
^{n}g^{ij}v_{i}w_{j},\ \ \text{in particular \ }\left|  \nabla_{g}v\right|
_{g}^{2}=\left\langle \nabla_{g}v,\nabla_{g}v\right\rangle _{g}.
\end{align*}

We begin by demonstrating a Jacobi inequality for the volume element
\[
V=\sqrt{\det g}=\prod_{i=1}^{n}(1+\lambda_{i}^{2})^{\frac{1}{2}}.
\]

\begin{lemma}
Suppose that $u$ is a smooth solution to (\ref{EsLag}) satisfying
(\ref{Cond}). \ Then%

\[
\bigtriangleup_{g}\ln V\;\geq\frac{\varepsilon}{n}|\nabla_{g}\ln V|_{g}^{2}%
\]
or equivalently
\begin{equation}
\bigtriangleup_{g}V^{\frac{\varepsilon}{n}}\geq2\frac{|\nabla_{g}%
V^{\frac{\varepsilon}{n}}|_{g}^{2}}{V^{\frac{\varepsilon}{n}}}. \label{Jac1}%
\end{equation}

\end{lemma}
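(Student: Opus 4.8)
The plan is to compute $\bigtriangleup_g \ln V$ explicitly in terms of the third derivatives of $u$ and the eigenvalues $\lambda_i$ of $D^2u$, and then show that the quadratic form appearing on the right-hand side is controlled, via the hypothesis (\ref{Cond}), by the gradient term $|\nabla_g \ln V|_g^2$. I would work at a fixed point $p$ and choose coordinates so that $D^2u(p)$ is diagonal, with entries $\lambda_1,\dots,\lambda_n$; then $g_{ij}(p)=(1+\lambda_i^2)\delta_{ij}$ and $g^{ii}(p)=1/(1+\lambda_i^2)$. Writing $u_{ijk}$ for the third derivatives, one has $\partial_k \ln V = \sum_i \frac{\lambda_i}{1+\lambda_i^2} u_{iik}$ at $p$ (differentiating $\ln V = \tfrac12\sum\ln(1+\lambda_i^2)$ and using that $\partial_k\lambda_i = u_{iik}$ when $D^2u$ is diagonal). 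Differentiating once more and contracting against $g^{kk}$, one obtains a first-derivative contribution plus a ``curvature'' contribution coming from $\partial_k g^{ii}$; crucially, the second derivatives of $u$ that appear through $\bigtriangleup_g(D^2 u)$ can be replaced using the equation: differentiating (\ref{EsLag}) twice gives $\sum_k g^{kk} u_{iik} = 0$ (first derivative of the PDE) and then a second differentiation yields $\bigtriangleup_g \lambda_i = \sum_{k}g^{kk}u_{iikk}$ in terms of a negative-definite combination of the $u_{ijk}^2$. This is the standard computation underlying the subharmonicity of $V$ for special Lagrangian equations (as in [Y2], [Y4]).

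After this reduction, $\bigtriangleup_g \ln V$ becomes an explicit quadratic form in the variables $u_{ijk}$ (finitely many, $i\le j\le k$), with coefficients built from $\lambda_i,\lambda_j,\lambda_k$ and the metric weights. The key algebraic fact — already used for the weaker statement $\bigtriangleup_g V\ge 0$ — is that, grouping terms by the index triples, the ``diagonal'' terms $u_{iii}^2$ and the ``mixed'' terms $u_{iij}^2$ (with two equal indices) and $u_{ijk}^2$ (all distinct) combine so that the coefficient of each $u_{ijk}^2$ is a multiple of an expression of the shape $3 + \lambda_i^2 + 2\lambda_i\lambda_j$ (up to symmetrization over the triple and the appropriate metric normalization). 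Here (\ref{Cond}) enters twice: it makes each such coefficient nonnegative (so $\bigtriangleup_g\ln V\ge 0$), and — because (\ref{Cond}) is the stronger inequality $3+(1-\varepsilon)\lambda_i^2+2\lambda_i\lambda_j\ge 0$ rather than just $3+\lambda_i^2+2\lambda_i\lambda_j\ge 0$ — it leaves an $\varepsilon$-fraction of the ``$\lambda_i^2 u_{iik}^2$''-type terms to spare. Those spare terms are exactly what reassemble (after a Cauchy–Schwarz estimate across the index $k$) into $|\nabla_g\ln V|_g^2 = \sum_k g^{kk}\bigl(\sum_i \frac{\lambda_i}{1+\lambda_i^2}u_{iik}\bigr)^2$, and tracking the constants produces the factor $\varepsilon/n$ (the $1/n$ coming from bounding the square of a sum of $n$ terms by $n$ times the sum of squares). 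The equivalence with (\ref{Jac1}) is then a one-line manipulation: with $W=V^{\varepsilon/n}$ one has $\nabla_g W = \tfrac{\varepsilon}{n}W\nabla_g\ln V$ and $\bigtriangleup_g W = \tfrac{\varepsilon}{n}W\bigl(\bigtriangleup_g\ln V + \tfrac{\varepsilon}{n}|\nabla_g\ln V|_g^2\bigr) \ge \tfrac{\varepsilon}{n}W\bigl((1+\tfrac{\varepsilon}{n})|\nabla_g\ln V|_g^2\bigr) \ge 2|\nabla_g W|_g^2/W$, the last step using $\tfrac{\varepsilon}{n}(1+\tfrac{\varepsilon}{n})\ge 2(\tfrac{\varepsilon}{n})^2$, i.e. $1+\tfrac{\varepsilon}{n}\ge 2\tfrac{\varepsilon}{n}$, which holds since $\varepsilon/n\le 1$.

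The main obstacle is the bookkeeping in the quadratic-form step: one must carefully separate the third-derivative terms into the families $u_{iii}$, $u_{iij}$ ($i\ne j$), $u_{ijk}$ (distinct), keep track of multiplicities from symmetrization, and verify that after using the twice-differentiated equation the surviving coefficients genuinely have the sign pattern governed by $3+\lambda_i^2+2\lambda_i\lambda_j$ — this is where all the special structure of the arctangent operator is used, and it is easy to mismanage a metric weight $1/(1+\lambda_i^2)$ or a cross term. A secondary subtlety is making sure the Cauchy–Schwarz step that rebuilds $|\nabla_g\ln V|_g^2$ uses only the genuinely spare $\varepsilon$-portion and does not borrow from terms already needed for nonnegativity; being slightly wasteful here only weakens the constant $\varepsilon/n$, which is harmless, so I would not optimize it.
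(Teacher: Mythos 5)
Your plan follows the same route as the paper: at a point where $D^{2}u$ is diagonalized, invoke Simons' formula $\bigtriangleup_{g}\ln V=\sum_{i,j,k}(1+\lambda_{i}\lambda_{j})h_{ijk}^{2}$ with $h_{ijk}=\sqrt{g^{ii}g^{jj}g^{kk}}\,u_{ijk}$, group terms by index pattern, bound $|\nabla_{g}\ln V|_{g}^{2}=\sum_{i}\bigl(\sum_{j}\lambda_{j}h_{jji}\bigr)^{2}\leq n\sum_{i,j}\lambda_{j}^{2}h_{jji}^{2}$ by Cauchy--Schwarz (your $1/n$), and absorb this into the $\varepsilon$-surplus that (\ref{Cond}) leaves in the coefficients $3+\lambda_{j}^{2}+2\lambda_{i}\lambda_{j}$ of the two-repeated-index terms. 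The equivalence with (\ref{Jac1}) is the routine computation you indicate, though your intermediate bound $\bigtriangleup_{g}\ln V+\frac{\varepsilon}{n}|\nabla_{g}\ln V|_{g}^{2}\geq(1+\frac{\varepsilon}{n})|\nabla_{g}\ln V|_{g}^{2}$ does not follow from the lemma; the correct lower bound is $\frac{2\varepsilon}{n}|\nabla_{g}\ln V|_{g}^{2}$, which gives exactly $2|\nabla_{g}W|_{g}^{2}/W$ with equality, so this slip is harmless.

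The genuine gap is the all-distinct-index terms. Their coefficient in Simons' formula is $2S_{ijk}$ with $S_{ijk}=3+\lambda_{i}\lambda_{j}+\lambda_{j}\lambda_{k}+\lambda_{k}\lambda_{i}$, which is \emph{not} ``a multiple of an expression of the shape $3+\lambda_{i}^{2}+2\lambda_{i}\lambda_{j}$ up to symmetrization'': symmetrizing (\ref{Cond}) over the triple $\{i,j,k\}$ only produces an essentially vacuous inequality involving $(\lambda_{i}+\lambda_{j}+\lambda_{k})^{2}$ and gives no lower bound for $S_{ijk}$. Since these terms do not appear in $|\nabla_{g}\ln V|_{g}^{2}$ (the gradient only involves $u_{jji}$ with a repeated index), all one needs is $S_{ijk}\geq0$ so that they can be discarded --- but that is precisely the one nontrivial algebraic step, and your sketch does not supply it. The paper's argument rewrites (\ref{Cond}) as $S_{ijk}\geq(\lambda_{k}-\lambda_{i})(\lambda_{i}+\lambda_{j})+\varepsilon\lambda_{i}^{2}$ and, swapping $i$ and $j$, as $S_{ijk}\geq(\lambda_{k}-\lambda_{j})(\lambda_{j}+\lambda_{i})+\varepsilon\lambda_{j}^{2}$; assuming by symmetry $\lambda_{i}\geq\lambda_{k}\geq\lambda_{j}$, at least one of the two products is nonnegative, whence $S_{ijk}\geq\varepsilon\min\{\lambda_{i}^{2},\lambda_{j}^{2}\}\geq0$. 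Without an argument of this kind the quadratic form could a priori have negative coefficients on the $h_{ijk}^{2}$ with distinct indices, and the inequality would not close.
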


\begin{proof}
\ By differentiating the minimal surface equation (\ref{Emin}) again and
performing some long and tedious computation, one gets the standard formula
for $\bigtriangleup_{g}\ln V;$ see for example [Y2, Lemma 2.1]. (The general
formula for minimal submanifolds of any dimension or codimension originates in
Simons [Ss, p. 90].)\ At any fixed point, we assume that $D^{2}u$ is
diagonalized, then
\[
\bigtriangleup_{g}\ln V=\sum_{i,j,k=1}^{n}(1+\lambda_{i}\lambda_{j}%
)h_{ijk}^{2},
\]
where $h_{ijk}=\sqrt{g^{ii}}\sqrt{g^{jj}}\sqrt{g^{kk}}u_{ijk}.$ Gathering all
terms containing $h_{ijj}^{2}=h_{jij}^{2}=h_{jji}^{2}$ for a fixed $i$, we
have
\begin{align*}
&  (1+\lambda_{i}^{2})h_{iii}^{2}+\sum_{j\neq i}(1+\lambda_{j}^{2})h_{jji}%
^{2}+\sum_{j\neq i}(1+\lambda_{i}\lambda_{j})h_{ijj}^{2}+\sum_{j\neq
i}(1+\lambda_{j}\lambda_{i})h_{jij}^{2}\\
&  =(1+\lambda_{i}^{2})h_{iii}^{2}+\sum_{j\neq i}(3+\lambda_{j}^{2}%
+2\lambda_{i}\lambda_{j})h_{jji}^{2}.
\end{align*}
Thus
\begin{align}
\bigtriangleup_{g}\ln V\;  &  =\sum_{i=1}^{n}\left[  (1+\lambda_{i}%
^{2})h_{iii}^{2}+\sum_{j\neq i}(3+\lambda_{j}^{2}+2\lambda_{i}\lambda
_{j})h_{jji}^{2}\right] \label{Simons}\\
&  +2\sum_{i<j<k}(3+\lambda_{i}\lambda_{j}+\lambda_{j}\lambda_{k}+\lambda
_{k}\lambda_{i})h_{ijk}^{2}.\nonumber
\end{align}
Condition (\ref{Cond}) gives that
\[
\underline{3}+\left(  1-\varepsilon\right)  \lambda_{i}^{2}+\lambda_{i}%
\lambda_{j}+\underline{\lambda_{i}\lambda_{j}+\lambda_{k}(\lambda_{j}%
+\lambda_{i})}-\lambda_{k}(\lambda_{j}+\lambda_{i})\geq0
\]
that is
\[
S_{ijk}=\underline{3+\lambda_{i}\lambda_{j}+\lambda_{j}\lambda_{k}+\lambda
_{k}\lambda_{i}}\geq(\lambda_{k}-\lambda_{i})(\lambda_{i}+\lambda
_{j})+\varepsilon\lambda_{i}^{2}.
\]
Switching $\lambda_{i}$ and $\lambda_{j,}$ we also have
\[
S_{ijk}=S_{jik}\geq(\lambda_{k}-\lambda_{j})(\lambda_{j}+\lambda
_{i})+\varepsilon\lambda_{j}^{2}.
\]
By symmetry of \ $S_{ijk},$ we may assume
\begin{equation}
\lambda_{i}\geq\lambda_{k}\geq\lambda_{j}, \label{Squeez}%
\end{equation}
then either $(\lambda_{k}-\lambda_{i})(\lambda_{i}+\lambda_{j})$ or
$(\lambda_{k}-\lambda_{j})(\lambda_{j}+\lambda_{i})$ has to be non-negative,
thus
\begin{equation}
S_{ijk}\geq\varepsilon\min\left\{  \lambda_{i}^{2},\lambda_{j}^{2}\right\}  .
\label{Lowerbound}%
\end{equation}
We conclude that
\begin{equation}
\bigtriangleup_{g}\ln V\geq\sum_{i}^{n}\left[  (1+\lambda_{i}^{2})h_{iii}%
^{2}+\sum_{j\neq i}(3+\lambda_{j}^{2}+2\lambda_{i}\lambda_{j})h_{jji}%
^{2}\right]  . \label{Diagonal}%
\end{equation}
To bound the gradient, we compute, (still at the same fixed point with
$D^{2}u$ diagonalized,)
\[
\partial_{i}\ln V=\sum_{j=1}^{n}g^{jj}\lambda_{j}u_{jji},
\]
then
\begin{align}
|\nabla_{g}\ln V|_{g}^{2}  &  =\sum_{i=1}^{n}g^{ii}\left(  \sum_{j=1}%
^{n}g^{jj}\lambda_{j}u_{jji}\right)  ^{2}\nonumber\\
&  =\sum_{i=1}^{n}\left(  \sum_{j=1}^{n}\lambda_{j}h_{jji}\right)  ^{2}\leq
n\sum_{i,j=1}^{n}\lambda_{j}{}^{2}h_{jji}^{2}. \label{Gradient}%
\end{align}
Combining (\ref{Cond}) with (\ref{Diagonal}) and (\ref{Gradient}) we have
\begin{align}
&  \bigtriangleup_{g}\ln V-\frac{\varepsilon}{n}|\nabla_{g}\ln V|^{2}%
\nonumber\\
&  \geq\sum_{i=1}^{n}(\left[  1+\left(  1-\varepsilon\right)  \lambda_{i}%
^{2}\right]  h_{iii}^{2}+\sum_{j\neq i}(\left[  3+(1-\varepsilon)\lambda
_{j}^{2}+2\lambda_{i}\lambda_{j}\right]  h_{jji}^{2}\geq0. \label{Jac-Diag}%
\end{align}
The proof of Lemma 2.1 is complete.\textbf{\textit{\ }}
\end{proof}

\begin{lemma}
Suppose that $u$ is a smooth solution to (\ref{EsLag}) on $B_{1}(0)$
satisfying condition (\ref{Cond}) and
\[
|\nabla u|\leq\delta<\frac{1}{\sqrt{n-1}}.
\]
Then
\[
|D^{2}u(0)|\leq C(n,\delta,\varepsilon).
\]

\end{lemma}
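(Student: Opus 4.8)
The plan is to bound $V(0)=\prod_{i}(1+\lambda_i(0)^2)^{1/2}$; this suffices, since each factor $1+\lambda_i^2$ is $\ge1$, so $\lambda_k(0)^2\le V(0)^2-1$ and hence $|D^2u(0)|\le\sqrt{V(0)^2-1}$. I would argue on the minimal surface $M=\{(x,\nabla u(x)):x\in B_1(0)\}\subset\R^n\times\R^n$ with its induced metric $g$, using that the coordinate functions $x_i$ and the ``height'' functions $u_i=\partial_iu$ are all $\bigtriangleup_g$-harmonic. Consequently $\bigtriangleup_g|x|^2=2\,\mathrm{tr}(g^{-1})$ and $\bigtriangleup_g|\nabla u|^2=2\big(n-\mathrm{tr}(g^{-1})\big)$ (their sum being $2n$, the minimal-surface identity for the position vector), while $|\nabla_g|\nabla u|^2|_g\le 2|\nabla u|$; in particular $|\nabla u|^2\le\delta^2<1/(n-1)$ on all of $M$.

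For the Bernstein-Pogorelov-Korevaar argument I would take the auxiliary function
\[
P=\zeta^{2}\,(\ln V)\,\Phi(|\nabla u|^2)
\]
on $M$, where $\zeta=\zeta(x)$ is a standard cutoff with $\zeta\equiv1$ on $B_{1/2}(0)$ and $\mathrm{supp}\,\zeta\subset B_1(0)$, and $\Phi(t)=(1-(n-1)t)^{-\beta}$ with an exponent $\beta=\beta(n,\varepsilon)>0$ to be fixed. Since $(n-1)|\nabla u|^2\le(n-1)\delta^2<1$, the factor $\Phi(|\nabla u|^2)$ is well defined and comparable to $1$ on $M$: this is where the hypothesis $\delta<1/\sqrt{n-1}$ is spent, rendering the Korevaar ``height'' correction legitimate. (The construction parallels Korevaar's interior gradient estimate for the minimal surface equation, with $\ln V$ in the role of $\log\sqrt{1+|Du|^2}$ and the $\R^n$-valued map $\nabla u$ in the role of the scalar height.) Since $\ln V\ge0$ and $P$ has compact support in $M$, it attains its maximum at an interior point $X_0$.

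At $X_0$ I would use $\nabla_gP=0$ to express $\nabla_g\ln V$ in terms of $\nabla_g\zeta$ and $\nabla_g|\nabla u|^2$, substitute this into $\bigtriangleup_gP\le0$, and invoke the Jacobi inequality $\bigtriangleup_g\ln V\ge\frac{\varepsilon}{n}|\nabla_g\ln V|_g^2$ of Lemma 2.1 — in fact keeping the sharper Simons identity (\ref{Simons}), so that its nonnegative terms $(1+\lambda_i^2)h_{iii}^2$ and $(3+\lambda_j^2+2\lambda_i\lambda_j)h_{jji}^2$ remain available for absorption. Expanding $\bigtriangleup_g|\nabla u|^2=2\sum_i\lambda_i^2/(1+\lambda_i^2)$ and $|\nabla_g|\nabla u|^2|_g^2=4\sum_i u_i^2\lambda_i^2/(1+\lambda_i^2)$, the $\Phi$-factor contributes a strictly signed term that persists even where $\nabla_g\zeta(X_0)=0$ — it vanishes only when $D^2u=0$, where there is nothing to prove — which cures the degeneracy that would otherwise defeat a purely $x$-dependent cutoff. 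After the usual dichotomy (according to whether the $|\nabla_g\ln V|_g^2$-term or the lower-order terms dominate), the computation should close to give $\ln V(X_0)\le C(n,\delta,\varepsilon)$, hence $\ln V(0)\le C(n,\delta,\varepsilon)$ and finally $|D^2u(0)|\le C(n,\delta,\varepsilon)$.

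The step I expect to be the real obstacle is precisely this last sign bookkeeping: the error terms produced by differentiating $\zeta^2\Phi(|\nabla u|^2)$ are cubic in the third derivatives $u_{ijk}$ and entangled with the Hessian eigenvalues, and matching them against the good terms of (\ref{Simons}) together with $\frac{\varepsilon}{n}|\nabla_g\ln V|_g^2$, without conceding a positive fraction, forces the coefficient of the dominant term to involve $(n-1)|\nabla u|^2$. The demand that this coefficient stay positive — equivalently, that the Cauchy-Schwarz step used to eliminate $\nabla_g\ln V$ via $\nabla_gP=0$ go through — is exactly the threshold $\delta<1/\sqrt{n-1}$, the limitation of the current arguments noted in the introduction.
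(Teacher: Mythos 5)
Your plan is in the right family of arguments --- a Bernstein--Pogorelov--Korevaar maximum-principle computation on the minimal graph driven by the Jacobi inequality of Lemma 2.1 --- and your individual identities (harmonicity of $x$ and of the $u_k$, $\bigtriangleup_g|\nabla u|^2=2\sum_i\lambda_i^2/(1+\lambda_i^2)$, $|\nabla_g|\nabla u|^2|_g\le 2|\nabla u|$) are correct. But the proof is not closed: the step you yourself defer as ``the real obstacle'' is the entire content of the lemma, and your specific choice of test function makes that step genuinely harder than necessary. Working with $\ln V$ and $\bigtriangleup_g\ln V\ge\frac{\varepsilon}{n}|\nabla_g\ln V|_g^2$, the cross term $2\langle\nabla_g(\zeta^2\Phi),\nabla_g\ln V\rangle_g$ at the maximum does not cancel against the Jacobi term; you must absorb it by Cauchy--Schwarz, which is what creates your dichotomy. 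Worse, the spatial cutoff $\zeta$ produces the signless term $(\ln V)\,\Phi\,\bigtriangleup_g\zeta^2$, of size $C(n,\zeta)\ln V$, and neither of your candidate good terms reliably beats it: the coefficient of $\frac{\varepsilon}{n}(\ln V)^2|\nabla_g\psi|_g^2/\psi$ can vanish at the maximum point, and $\zeta^2\Phi'\bigtriangleup_g|\nabla u|^2$ degenerates where $\zeta$ is small or where $D^2u$ is small; ``strictly positive unless $D^2u=0$'' is not a quantitative lower bound, and the case $\nabla_g\psi(X_0)=0$ with $\zeta(X_0)<1$ is not disposed of. (Also a small slip: the error terms from differentiating $\zeta^2\Phi(|\nabla u|^2)$ are at most quadratic, not cubic, in $D^3u$, since $\bigtriangleup_g|\nabla u|^2$ contains no third derivatives at all.)

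The paper's proof is arranged precisely to avoid all of this. First, it exponentiates: $b=V^{\varepsilon/n}$ satisfies the equivalent form $\bigtriangleup_gb\ge 2|\nabla_gb|_g^2/b$, and the exponent $2$ is exactly what makes $\eta\bigtriangleup_gb+2\langle\nabla_g\eta,\nabla_gb\rangle_g=\eta(\bigtriangleup_gb-2|\nabla_gb|_g^2/b)\ge0$ at a critical point of $\eta b$, so the maximum-point inequality collapses to the single statement $\bigtriangleup_g\eta\le0$ with no absorption. Second, there is no spatial cutoff: after replacing $u$ by $v=u+\alpha\langle\nabla u(0)/|\nabla u(0)|,x\rangle$ with $\alpha=(\tfrac{1}{\sqrt{n-1}}-\delta)/2$, so that $|\nabla v(0)|\ge\alpha$ while $|\nabla v|\le\alpha+\delta<1/\sqrt{n-1}$, the cutoff is $\eta=[|\nabla v|^2-(\alpha+\delta)^2|x|^2]^+$, which vanishes on $\partial B_1$ by the gradient bound and is positive at $0$. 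Then $\bigtriangleup_g\eta=2\sum_i\frac{\lambda_i^2-(\alpha+\delta)^2}{1+\lambda_i^2}\le0$; each of the $n-1$ smaller summands is $\ge-(\alpha+\delta)^2$ while the largest tends to $1$ as $\lambda_1\to\infty$, so the condition $(n-1)(\alpha+\delta)^2<1$ --- this elementary arithmetic, not a Cauchy--Schwarz threshold, is where $\delta<1/\sqrt{n-1}$ is actually spent --- bounds $\lambda_1(p)$, hence $b(p)$, hence $\alpha^2b(0)\le\eta(p)b(p)$ gives the estimate at the origin. Your scheme might be completable with further work on the cutoff, but as written the decisive estimate is missing, and the mechanism you propose for the $1/\sqrt{n-1}$ threshold is not the one that makes the argument close.
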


\begin{proof}
Set
\[
v=u+\alpha\frac{1}{\left|  \nabla u\left(  0\right)  \right|  }\left\langle
\nabla u\left(  0\right)  ,x\right\rangle \ \ \ \ \text{or\ \ \ \ }u+\alpha
x_{1}\ \ \text{if \ \ }\nabla u\left(  0\right)  =0,
\]
where $\alpha=\left(  \frac{1}{\sqrt{n-1}}-\delta\right)  /2.$ \ \ Now $v$
satisfies in $B_{1}$ the following
\[
D^{2}v=D^{2}u,\ \ \left|  \nabla v\left(  0\right)  \right|  \geq
\alpha,\ \ \text{and}\ \ \left|  \nabla v\right|  \leq\alpha+\delta<\frac
{1}{\sqrt{n-1}}.
\]

Set $b$ $=V^{\frac{\varepsilon}{n}},$ and consider the function
\[
w=\eta b=\left[  |\nabla v|^{2}-\left(  \alpha+\delta\right)  ^{2}%
|x|^{2}\right]  ^{+}b\geq0.
\]
A\ positive maximum for $w$ will be attained at a point $p$ on the interior,
since $w\left(  0\right)  >0$ and $w\left(  x\right)  $ vanishes on the
boundary $\partial B_{1}.$ \ At this point $p,$
\[
\nabla_{g}\left(  \eta b\right)  =0\ \ \ \ \ \text{or}\ \ \ \ \nabla_{g}%
\eta=-\frac{\eta}{b}\nabla_{g}b,
\]%
\begin{align*}
0  &  \geq\bigtriangleup_{g}\left(  \eta b\right)  =\eta\bigtriangleup
_{g}b+2\left\langle \nabla_{g}\eta,\nabla_{g}b\right\rangle _{g}%
+b\bigtriangleup_{g}\eta\\
&  =\eta\left(  \bigtriangleup_{g}b-2\frac{\left\vert \nabla_{g}b\right\vert
_{g}^{2}}{b}\right)  +b\bigtriangleup_{g}\eta\\
&  \geq b\bigtriangleup_{g}\eta,
\end{align*}
by the inequality (\ref{Jac1}) in Lemma 2.1. This last inequality implies a
bound on $|D^{2}v(p)|$ as the following. We have
\begin{align*}
0  &  \geq\bigtriangleup_{g}\eta=\bigtriangleup_{g}\left[  |\nabla
v|^{2}-\left(  \alpha+\delta\right)  ^{2}|x|^{2}\right] \\
&  =\sum_{i,j=1}^{n}g^{ij}\left[  2\sum_{k=1}^{n}\left(  v_{ki}v_{kj}%
+v_{k}\partial_{ij}v_{k}\right)  -\left(  \alpha+\delta\right)  ^{2}%
\partial_{ij}|x|^{2}\right] \\
&  =2\sum_{i=1}^{n}\frac{\lambda_{i}^{2}-\left(  \alpha+\delta\right)  ^{2}%
}{1+\lambda_{i}^{2}}\geq2\left[  \frac{\lambda_{1}^{2}-\left(  \alpha
+\delta\right)  ^{2}}{1+\lambda_{1}^{2}}-(n-1)\left(  \alpha+\delta\right)
^{2}\right]  ,
\end{align*}
using the minimal surface equation (\ref{Emin}) and assuming $\left\vert
\lambda_{1}\right\vert \geq\left\vert \lambda_{i}\right\vert $ for all $i.$ It
follows that
\[
1+\lambda_{1}^{2}\left(  p\right)  \leq\frac{1+\left(  \alpha+\delta\right)
^{2}}{1-(n-1)\left(  \alpha+\delta\right)  ^{2}}.
\]
We get
\[
\alpha^{2}b\left(  0\right)  \leq\left\vert \nabla v\left(  0\right)
\right\vert ^{2}b\left(  0\right)  \leq\eta\left(  p\right)  b\left(
p\right)  \leq\left(  \alpha+\delta\right)  ^{2}\left[  \frac{1+\left(
\alpha+\delta\right)  ^{2}}{1-(n-1)\left(  \alpha+\delta\right)  ^{2}}\right]
^{\frac{\varepsilon}{2}},
\]
then
\begin{equation}
1+\lambda_{i}^{2}\left(  0\right)  \leq\left(  1+\frac{\delta}{\alpha}\right)
^{\frac{4n}{\varepsilon}}\left[  \frac{1+\left(  \alpha+\delta\right)  ^{2}%
}{1-(n-1)\left(  \alpha+\delta\right)  ^{2}}\right]  ^{n}.
\label{2. bound at 0}%
\end{equation}

Therefore, we conclude the estimate $|D^{2}u(0)|\leq C(n,\delta,\varepsilon)$
in Lemma 2.2.
\end{proof}

\begin{lemma}
Let $u\in C^{\infty}\left(  \mathbb{R}^{n}\backslash\left\{  0\right\}
\right)  $ be a solution to the special Lagrangian equation (\ref{EsLag}) and
homogeneous of order $2$; that is, $u\left(  x\right)  =\left|  x\right|
^{2}u\left(  x/\left|  x\right|  \right)  .$ Suppose that the eigenvalues
$\lambda_{i}$ of the Hessian $D^{2}u\left(  x\right)  $ satisfy (\ref{Cond}).
Then $u$ must be quadratic.
\end{lemma}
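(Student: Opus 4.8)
The plan is to combine the Jacobi inequality of Lemma 2.1 with the degree-$2$ homogeneity, working throughout on the punctured space $\mathbb{R}^{n}\setminus\{0\}$ so that I never need to remove the singularity at the origin. Since $u$ is homogeneous of degree $2$, $\nabla u$ is homogeneous of degree $1$ and $D^{2}u$ is homogeneous of degree $0$; being continuous on $\mathbb{R}^{n}\setminus\{0\}$ it is then determined by its restriction to the compact sphere $S^{n-1}$, so $|\lambda_{i}|\leq K$ for some constant $K$. Consequently the induced metric $g=I+(D^{2}u)^{2}$ has eigenvalues in $[1,1+K^{2}]$, the operator $\bigtriangleup_{g}=g^{ij}\partial_{ij}$ is uniformly elliptic on $\mathbb{R}^{n}\setminus\{0\}$, and $\ln V=\tfrac{1}{2}\sum_{i}\ln(1+\lambda_{i}^{2})$ is a bounded, smooth, degree-$0$ homogeneous function there.

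By Lemma 2.1, $\bigtriangleup_{g}\ln V\geq\tfrac{\varepsilon}{n}|\nabla_{g}\ln V|_{g}^{2}\geq0$, so $\ln V$ is a classical subsolution of the uniformly elliptic operator $\bigtriangleup_{g}$ on the connected domain $\mathbb{R}^{n}\setminus\{0\}$. By homogeneity $\sup_{\mathbb{R}^{n}\setminus\{0\}}\ln V=\max_{S^{n-1}}\ln V$, and this maximum is attained at a point of $S^{n-1}$, which is an interior point of the domain; the strong maximum principle then forces $\ln V$, hence $V$, to be constant.

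With $\ln V$ constant we have $\bigtriangleup_{g}\ln V=0$ and $\nabla_{g}\ln V=0$, so in formula (\ref{Simons}), evaluated at a point where $D^{2}u$ is diagonalized, every summand on the right vanishes. Condition (\ref{Cond}) makes the diagonal coefficients strictly positive: $1+\lambda_{i}^{2}\geq1$ and $3+\lambda_{j}^{2}+2\lambda_{i}\lambda_{j}=[\,3+(1-\varepsilon)\lambda_{j}^{2}+2\lambda_{i}\lambda_{j}\,]+\varepsilon\lambda_{j}^{2}>0$. Hence $h_{iii}=0$ and $h_{jji}=0$ for all $i\neq j$, that is, $u_{jjm}=0$ for all $j,m$ in that eigenframe. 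Therefore $\nabla(\bigtriangleup u)$, whose components in this eigenframe are $\sum_{j}u_{jjm}$, vanishes at every point of $\mathbb{R}^{n}\setminus\{0\}$, so $\bigtriangleup u\equiv a$ is a constant.

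Finally, $u_{0}:=u-\tfrac{a}{2n}|x|^{2}$ is harmonic and homogeneous of degree $2$ on $\mathbb{R}^{n}\setminus\{0\}$; writing $u_{0}=|x|^{2}Y(x/|x|)$, harmonicity forces $Y$ to be an eigenfunction of the spherical Laplacian with eigenvalue $2n$, i.e. a degree-$2$ spherical harmonic, so $u_{0}$ is a homogeneous quadratic harmonic polynomial and $u=u_{0}+\tfrac{a}{2n}|x|^{2}$ is a quadratic polynomial. I expect the genuine obstacle to be the passage from subharmonicity to constancy of $V$: this is exactly where the degree-$2$ homogeneity is indispensable, since it places the maximum of $\ln V$ in the interior of the punctured domain so that the strong maximum principle applies; bounded subharmonic functions on all of $\mathbb{R}^{n}$ need not be constant, so merely extending $\ln V$ across the origin and invoking a Liouville theorem would not suffice. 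The remaining rigidity (the equality case of Simons' identity, made strict by the $\varepsilon>0$ in (\ref{Cond})) and the spherical-harmonic expansion are routine.
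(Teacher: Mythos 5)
Your proof is correct. Its first half is exactly the paper's argument: by degree-$2$ homogeneity the degree-$0$ function $\ln V$ attains its supremum at an interior point of the connected punctured space, so the Jacobi inequality of Lemma 2.1 plus the strong maximum principle force $V\equiv const$. The endgame, however, is genuinely different. The paper notes that under (\ref{Cond}) \emph{all} coefficients in the Simons identity (\ref{Simons}) --- including the off-diagonal ones $S_{ijk}$, by the analysis around (\ref{Squeez}) and (\ref{Lowerbound}) --- are strictly positive, so $\bigtriangleup_{g}\ln V\geq c(\lambda)\sum h_{ijk}^{2}$ with $c(\lambda)>0$ and the constancy of $V$ yields $D^{3}u\equiv0$ outright; quadraticity is immediate. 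You instead exploit strict positivity only of the diagonal coefficients (your identity $3+\lambda_{j}^{2}+2\lambda_{i}\lambda_{j}=[3+(1-\varepsilon)\lambda_{j}^{2}+2\lambda_{i}\lambda_{j}]+\varepsilon\lambda_{j}^{2}>0$ is valid, the case $\lambda_{j}=0$ giving the value $3$), deduce $u_{jji}=0$ in the eigenframe, hence $\nabla(\bigtriangleup u)\equiv0$, and then finish with a spherical-harmonics expansion of the harmonic, degree-$2$ homogeneous function $u-\frac{a}{2n}|x|^{2}$. This is essentially the route of Proposition 3.1 in the appendix, which is designed for the borderline condition (\ref{W-Cond}) where the diagonal coefficients may vanish and an extra algebraic argument is required; Proposition 3.1 closes instead by applying the maximum principle to each degree-$0$ homogeneous $u_{ij}$, which is marginally more elementary than your eigenvalue computation on $S^{n-1}$ (though both are standard and correct). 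In short: your argument buys nothing extra here but would survive a weakening of the strictness on the off-diagonal coefficients, whereas the paper's is shorter because it spends the full strength of $\varepsilon>0$ to kill $D^{3}u$ in one stroke.
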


\begin{proof}
Lemma 2.3 follows from Proposition 3.1; nonetheless we give a direct proof in
the following. Considering (\ref{Cond}), (\ref{Squeez}), and (\ref{Lowerbound}%
), we observe that the coefficients of $h_{ijk}^{2}$ in (\ref{Simons}) are
strictly positive. Accordingly,
\begin{equation}
\bigtriangleup_{g}\ln V\geq c\left(  \lambda\right)  \sum_{i,j,k=1}^{n}%
h_{ijk}^{2} \label{StrictSimons}%
\end{equation}
with $c\left(  \lambda\right)  >0.$

Since $u$ is homogeneous of order $2,$ the homogeneous order $0$ function $\ln
V$ attains its maximum along a ray. We infer from the strong maximum principle
that $\ln V\equiv const.$ It follows from (\ref{StrictSimons}) that
$D^{3}u\equiv0$. Therefore, $u$ must be quadratic, as claimed in Lemma 2.3.
\end{proof}

\begin{proof}
[Proof of Theorem 1.1]Now the Hessian bound is available by Lemma 2.2. We run
the \textquotedblleft routine\textquotedblright\ blow-down procedure
\textquotedblleft in detail\textquotedblright\ to finish the proof of Theorem
1.1, as in [Y2].

Step 1. From the assumption that $|\nabla u(x)|\leq\delta|x|$ for large $x, $
we have on the ball $B_{R}\left(  p\right)  $ with any fixed $p\in
\mathbb{R}^{n}$%
\[
|\nabla u(x)|\leq\delta\left(  \left\vert p\right\vert +R\right)  =\left(
\delta+\frac{\delta\left\vert p\right\vert }{R}\right)  R.
\]
A rescaled version of Lemma 2.2 with $R$ going to $\infty$ then leads to a
Hessian bound, $|D^{2}u(p)|\leq C(n,\delta,\varepsilon)\triangleq K,$ which
must hold at each point $p\in\mathbb{R}^{n}.$

Step 2. Repeating verbatim the argument in [Y1, p.263--264], we show that we
can find a tangent cone of the special Lagrangian graph $\left(  x,\nabla
u\left(  x\right)  \right)  $ at $\infty$ whose potential function is
$C^{1,1}, $ homogenous order $2,$ and still satisfies the
``convexity''\ condition (\ref{Cond}).

Without loss of generality, we assume $u\left(  0\right)  =0,$ $\triangledown
u\left(  0\right)  =0.$ We ``blow down''\ $u$ at $\infty.$

Set
\[
u_{k}\left(  x\right)  =\frac{u\left(  kx\right)  }{k^{2}},\;k=1,2,3,\cdots.
\]
We see that
\[
\left\|  u_{k}\right\|  _{C^{1,1}\left(  B_{R}\right)  }\leq C\left(
K,R\right)  ,
\]
so there exists a subsequence, still denoted by $\left\{  u_{k}\right\}  $ and
a function $u_{R}\in C^{1,1}\left(  B_{R}\right)  $ such that $u_{k}%
\rightarrow u_{R}$ in $C^{1,\alpha}\left(  B_{R}\right)  $ as $k\rightarrow
\infty,$ and $\left|  D^{2}u_{R}\right|  \leq K.$ By the fact that the family
of viscosity solution is closed under $C^{0}$ uniform limit, we know that
$u_{R}$ is also a viscosity solution of
\[
F\left(  D^{2}u\right)  =\sum_{i=1}^{n}\arctan\lambda_{i}=c\ \ \text{on}%
\ \ B_{R}.
\]
Applying the $W^{2,\delta}$ estimate (cf. [CC] Proposition 7.4) to the
difference $u_{k}-u_{R},$ we have
\[
\left\|  D^{2}u_{k}-D^{2}u_{R}\right\|  _{L^{\delta}\left(  B_{R/2}\right)
}\leq C\left(  K,R\right)  \left\|  u_{k}-u_{R}\right\|  _{L^{\infty}\left(
B_{R}\right)  }\rightarrow0\;\text{as }k\rightarrow\infty.
\]
Note that $\left|  D^{2}u_{k}\right|  ,\left|  D^{2}u_{R}\right|  \leq K,$ so
also
\[
\left\|  D^{2}u_{k}-D^{2}u_{R}\right\|  _{L^{n}\left(  B_{R/2}\right)
}\rightarrow0\;\text{as }k\rightarrow\infty.
\]
By a standard fact from real analysis, there exists another subsequence and
$C^{1,1}$ function on $B_{R}$, still denoted by $\left\{  u_{k}\right\}  $ and
$u_{R/2}$ such that $D^{2}u_{k}\rightarrow D^{2}u_{R/2}$ almost everywhere as
$k\rightarrow\infty.$ So $D^{2}u_{R}$ still satisfies (\ref{Cond}) almost
everywhere on $B_{R/2}.$

The diagonalizing process yields yet another subsequence, again denoted by
$\left\{  u_{k}\right\}  $ and $v\in C^{1,1}\left(  \mathbb{R}^{n}\right)  $
such that $u_{k}\rightarrow v$ in $W_{loc}^{2,n}\left(  \mathbb{R}^{n}\right)
$ as $k\rightarrow\infty;$ $v$ is a viscosity solution of (\ref{EsLag}) on
$\mathbb{R}^{n};\ \left|  D^{2}v\right|  \leq K;$ and $D^{2}v$ still satisfies
(\ref{Cond}) almost everywhere on $\mathbb{R}^{n}.$

The surfaces $\left(  x,\triangledown u_{k}\left(  x\right)  \right)  $ are
minimal in $\mathbb{R}^{n}\times\mathbb{R}^{n}$ and their potentials $u_{k}$
converge to $v$ in $W_{loc}^{2,n}\left(  \mathbb{R}^{n}\right)  ,$ so by the
monotonicity formula (cf. [S, p.84, Theorem 19.3] ), we conclude that
$M_{v}=\left(  x,\triangledown v\left(  x\right)  \right)  $ is a cone.

Step 3. We claim that $M_{v}$ is smooth away from the vertex. Suppose $M_{v}$
is singular at $P$ away from the vertex. We blow up $M_{v}$ at $P$ to get a
tangent cone, which is a lower dimensional special Lagrangian cone crossing a
line; repeat the procedure if the resulting cone is still singular away from
the vertex. Finally we get a special Lagrangian cone which is smooth away from
the vertex, and the bounded eigenvalues of the Hessian of the potential
function satisfies (\ref{Cond}), by a similar $W^{2,\delta}$ argument as in
Step 2. By Lemma 2.3, the cone is flat. This is a contradiction to Allard's
regularity result (cf. [S, Theorem 24.2]).

Applying Lemma 2.3 to $M_{v},$ we see that $M_{v}$ is flat.

Step 4. Now with the flatness of $M_{v}$, a final application of the
monotonicity formula yields that the original gradient graph $(x,\nabla u(x))
$ is also a plane (cf. [Y2, p.123]). Therefore, $u$ is a quadratic polynomial.
\end{proof}

\section{Appendix}

We include here a uniqueness result for global solutions to the special
Lagrangian equation (\ref{EsLag}) with bounded Hessian satisfying certain
``convexity''\ constraints (\ref{W-Cond}). The constraints are only needed for
$n\geq4.$

\begin{theorem}
Let $u$ be a smooth solution to the special Lagrangian equation (\ref{EsLag}).
Suppose that the eigenvalues $\lambda_{i}$ of the Hessian $D^{2}u\left(
x\right)  $ are bounded $\left|  \lambda_{i}\left(  x\right)  \right|  \leq K$
and satisfy
\begin{equation}
3+\lambda_{i}^{2}\left(  x\right)  +2\lambda_{i}\left(  x\right)  \lambda
_{j}\left(  x\right)  \geq0\quad\label{W-Cond}%
\end{equation}
for all $i,\ j,\ $and $x.$ Then $u$ must be a quadratic polynomial.
\end{theorem}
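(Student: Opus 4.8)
The plan is to run exactly the blow-down machinery from the proof of Theorem 1.1, with one simplification and one genuine modification. The simplification: the Hessian bound $|D^2u|\leq K$ is now assumed outright, so Lemma 2.2 (the Bernstein--Pogorelov--Korevaar estimate) and with it the growth hypothesis (\ref{Unpleasant}) are not needed, and Step 1 of the proof of Theorem 1.1 is simply omitted. What I would keep is Steps 2--4: from the rescalings $u_k(x)=u(kx)/k^2$ extract, via the uniform $C^{1,1}$ bound and the $W^{2,\delta}$ estimate, a subsequential limit $v\in C^{1,1}$ which is a viscosity solution of (\ref{EsLag}) with $|D^2v|\leq K$ and whose Hessian still satisfies (\ref{W-Cond}) almost everywhere (the extraction of an a.e.-convergent subsequence of Hessians is insensitive to whether the pointwise constraint is (\ref{Cond}) or (\ref{W-Cond}), since both are closed conditions); then the monotonicity formula makes $M_v=(x,\nabla v(x))$ a cone, i.e.\ $v$ homogeneous of order $2$; Allard's theorem together with dimension reduction makes $M_v$ and its iterated tangent cones smooth away from the vertex; a rigidity statement forces all of them to be flat; and a last application of the monotonicity formula returns the conclusion that the original graph, hence $u$, is quadratic. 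The only new ingredient is the rigidity statement for homogeneous degree-$2$ solutions under the borderline constraint (\ref{W-Cond}) --- the analogue of Lemma 2.3.

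To prove this analogue of Lemma 2.3 I would first record the $\varepsilon=0$ version of Lemma 2.1: under (\ref{W-Cond}),
\[
\bigtriangleup_g\ln V\geq 0 .
\]
This is the computation in the proof of Lemma 2.1 with $\varepsilon$ set to $0$ --- in the regrouped Simons identity (\ref{Simons}) the coefficient $3+\lambda_j^2+2\lambda_i\lambda_j$ is nonnegative by (\ref{W-Cond}) itself, $1+\lambda_i^2\geq 1$, and the off-diagonal coefficients $S_{ijk}$ are nonnegative by the squeeze argument (\ref{Squeez})--(\ref{Lowerbound}), where the lower bound degenerates to $S_{ijk}\geq 0$; the gradient term on the right of (\ref{Jac1}) is lost, but it was only used to feed Lemma 2.2. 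Now, for $u$ homogeneous of order $2$ and smooth away from the origin (the situation produced by the dimension-reduction step), $\ln V$ is homogeneous of degree $0$, hence attains its maximum along a ray; being $\bigtriangleup_g$-subharmonic, by the strong maximum principle it is constant, so $V$ is constant.

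The hard part will be the last step: upgrading ``$V$ constant'' to ``$u$ quadratic''. With $V$ constant, $\bigtriangleup_g\ln V\equiv 0$, so every nonnegative term of (\ref{Simons}) vanishes: $h_{iii}\equiv 0$, while $h_{jji}\equiv 0$ unless $3+\lambda_j^2+2\lambda_i\lambda_j=0$, and $h_{ijk}\equiv 0$ (for $i<j<k$) unless $S_{ijk}=0$. Under the strict constraint (\ref{Cond}) every such coefficient is strictly positive --- that is the content of (\ref{StrictSimons}) --- and one gets $D^3u\equiv 0$ for free, which is precisely the proof of Lemma 2.3. Under (\ref{W-Cond}) the coefficients can genuinely vanish (the bound $|\lambda_i|\leq K$ still permits $|\lambda_i|\geq\sqrt3$), so I would need to rule out surviving third derivatives in this degenerate locus. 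The plan is to exploit (\ref{W-Cond}) in both orderings of the pair $i,j$: where $3+\lambda_j^2+2\lambda_i\lambda_j=0$, subtracting this from the swapped inequality $3+\lambda_i^2+2\lambda_i\lambda_j\geq 0$ forces $\lambda_i^2\geq\lambda_j^2$ and pins the relevant eigenvalues into an essentially rigid configuration; combining this with the equation (\ref{EsLag}) and homogeneity should rule out any nonzero $h_{jji}$ or $h_{ijk}$ --- equivalently, the harmonic Gauss map cannot touch the boundary of the region cut out by (\ref{W-Cond}) in a direction along which $D^3u\neq 0$. Once $D^3u\equiv 0$ on $\mathbb{R}^n\setminus\{0\}$, $u$ is quadratic, the rigidity statement is established, and Steps 2--4 above finish the theorem. (For $n=2,3$ this degenerate analysis is unnecessary: $n=2$ is Fu's classification, and for $n=3$ the bounded Hessian alone --- with no constraint --- already runs the blow-down, via Fischer-Colbrie.)
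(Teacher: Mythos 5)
Your overall architecture is exactly the paper's: drop Lemma 2.2 and Step 1 (the Hessian bound is now hypothesis), keep the blow-down Steps 2--4 (the constraint (\ref{W-Cond}) being closed is indeed all that is needed for the a.e.\ Hessian convergence), and replace Lemma 2.3 by a rigidity statement for homogeneous order-$2$ solutions under (\ref{W-Cond}). The first half of that rigidity statement is also as in the paper: $\bigtriangleup_{g}\ln V\geq0$ from (\ref{Simons}) with $\varepsilon=0$, and the strong maximum principle applied to the homogeneous degree-$0$ function $\ln V$ gives $V\equiv const$, hence $\bigtriangleup_{g}\ln V\equiv0$ and every nonnegative term retained in the Simons identity must vanish.

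The gap is in your endgame. You propose to show $D^{3}u\equiv0$ by ruling out every surviving $h_{jji}$ and $h_{ijk}$ on the degenerate locus, but you only assert that the rigid eigenvalue configuration ``should'' accomplish this; it does not, at least not termwise. The minimal surface equation $0=\bigtriangleup_{g}u_{i}=\sum_{j}g^{jj}u_{jji}$ controls only a weighted sum, so two degenerate indices $j$ with opposite signs of $u_{jji}$ cannot be excluded this way, and the genuinely off-diagonal $h_{ijk}$ (distinct $i<j<k$ with $S_{ijk}=0$) do not even enter that equation. The paper's actual move is weaker and sharper: from $3+\lambda_{j}^{2}+2\lambda_{i}\lambda_{j}=0$ together with $\lambda_{j}^{2}\leq\lambda_{i}^{2}$ (your swap trick) all the degenerate $\lambda_{j}$ equal the single value $-\lambda_{i}\mp\sqrt{\lambda_{i}^{2}-3}$, so $g^{jj}$ is one common constant for every $j$ contributing to the sum, and the minimal surface equation collapses to $\partial_{i}(\bigtriangleup u)=\sum_{j}u_{jji}=0$. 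Thus one obtains $\bigtriangleup u=const$ --- not $D^{3}u\equiv0$ --- whence each $u_{ij}$ is harmonic for the ordinary Laplacian, homogeneous of degree $0$, and therefore constant by the strong maximum principle; quadraticity follows. You have the key computation in hand (uniqueness of the degenerate eigenvalue), but it must be fed into $\partial_{i}(\bigtriangleup u)$ rather than used to kill individual third derivatives, and as written your proof of the crucial rigidity step is incomplete.
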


\begin{proof}
The proof is identical to the one of Theorem 1.1 with Lemma 2.3 replaced by
the following proposition.
\end{proof}

\begin{proposition}
Let $u\in C^{\infty}\left(  \mathbb{R}^{n}\backslash\left\{  0\right\}
\right)  $ be a solution to the special Lagrangian equation (\ref{EsLag}) and
homogeneous of order $2$, that is $u\left(  x\right)  =\left\vert x\right\vert
^{2}u\left(  x/\left\vert x\right\vert \right)  .$ Suppose that the
eigenvalues $\lambda_{i}$ of the Hessian $D^{2}u\left(  x\right)  $ satisfy
(\ref{W-Cond}) for all $i,\ j,\ $and $x\neq0.$ Then $u$ must be quadratic.

\begin{proof}
By (\ref{W-Cond}), we certainly have (\ref{Jac-Diag}) with $\varepsilon=0$ in
Lemma 2.1, that is
\begin{align}
\bigtriangleup_{g}\ln V  &  \geq\sum_{i=1}^{n}\left(  1+\lambda_{i}%
^{2}\right)  h_{iii}^{2}+\sum_{j\neq i}\left(  3+\lambda_{j}^{2}+2\lambda
_{i}\lambda_{j}\right)  h_{jji}^{2}\nonumber\\
&  =\sum_{i=1}^{n}\frac{1}{\left(  1+\lambda_{i}^{2}\right)  ^{2}}u_{iii}%
^{2}+\sum_{j\neq i}\frac{\left(  3+\lambda_{j}^{2}+2\lambda_{i}\lambda
_{j}\right)  }{\left(  1+\lambda_{j}^{2}\right)  ^{2}\left(  1+\lambda_{i}%
^{2}\right)  }u_{jji}^{2}\geq0. \label{JD2}%
\end{align}
Since $u$ is homogeneous of order $2$, the Hessian $D^{2}u\left(  x\right)  $
is homogeneous of order $0$, hence $\ln V$ must attain its maximum along a
ray. The strong maximum principle yields that $\ln V$ is constant, so in fact
\begin{equation}
0=\bigtriangleup_{g}\ln V. \label{volconst}%
\end{equation}
We claim now that
\begin{equation}
\bigtriangleup u\;=const \label{traceconst}%
\end{equation}
on $\mathbb{R}^{n}\backslash\left\{  0\right\}  .$ At any point $p$ compute
the derivative
\begin{equation}
\partial_{i}\;(\bigtriangleup u)=\sum_{j}u_{jji} \label{tobecontra}%
\end{equation}
for all $i.$ \ Still assuming that $D^{2}u$ is diagonalized at $p$, an
inspection of (\ref{JD2}), together with (\ref{volconst}) shows that for all
$j$ with $u_{jji}\neq0$,
\begin{equation}
3+\lambda_{j}^{2}+2\lambda_{i}\lambda_{j}\ =0.\quad\label{eq32}%
\end{equation}
From $3+\lambda_{i}^{2}+2\lambda_{i}\lambda_{j}\geq0,$ we see that
$\lambda_{i}^{2}\geq\lambda_{j}^{2}.$ Solving (\ref{eq32}) for $\lambda_{j}$
we get\
\[
\lambda_{j}=-\lambda_{i}-\sqrt{\lambda_{i}^{2}-3},\text{ if }\lambda
_{i}<0\text{ }%
\]%
\[
\lambda_{j}=-\lambda_{i}+\sqrt{\lambda_{i}^{2}-3},\text{ if }\lambda
_{i}>0\text{ .}%
\]
The minimal surface equation (\ref{Emin})\ at $p$ then reads
\[
0=\bigtriangleup_{g}u_{i}\overset{p}{=}\sum_{j}\frac{1}{1+\lambda_{j}^{2}%
}\,u_{jji}=\frac{1}{1+\left(  -\lambda_{i}\pm\sqrt{\lambda_{i}^{2}-3}\right)
^{2}}\sum_{j}\,u_{jji}.
\]
Hence $\partial_{i}(\bigtriangleup u)=0$ and $\bigtriangleup u$ is constant. \ \ \ 

Differentiating (\ref{traceconst}), we see that each $u_{ij}$ satisfies
\[
\bigtriangleup u_{ij}=0.
\]
Applying the strong maximum principle once again to each (homogeneous order
$0$) function $u_{ij}$, we have immediately
\[
u_{ij}=const;
\]
that is, $u$ is quadratic.
\end{proof}
\end{proposition}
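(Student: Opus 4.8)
The plan is to combine the homogeneity of $u$ with the Jacobi-type inequality of Lemma 2.1, which survives at $\varepsilon=0$ under the weaker hypothesis (\ref{W-Cond}). We may assume $n\geq 2$; for $n=1$ the equation forces $u''=\tan c$ and the claim is trivial. Since (\ref{W-Cond}) is exactly (\ref{Cond}) with $\varepsilon=0$, the computation behind Lemma 2.1 -- including the squeezing argument (\ref{Squeez})--(\ref{Lowerbound}) that keeps the coefficients $S_{ijk}=3+\lambda_{i}\lambda_{j}+\lambda_{j}\lambda_{k}+\lambda_{k}\lambda_{i}$ nonnegative -- gives (\ref{Jac-Diag}) with $\varepsilon=0$, i.e. $\bigtriangleup_{g}\ln V\geq\sum_{i}[(1+\lambda_{i}^{2})h_{iii}^{2}+\sum_{j\neq i}(3+\lambda_{j}^{2}+2\lambda_{i}\lambda_{j})h_{jji}^{2}]\geq 0$.

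Next I use homogeneity. As $u$ is homogeneous of order $2$, the Hessian $D^{2}u$ and hence $\ln V$ are homogeneous of order $0$, so $\ln V$ is bounded on $\mathbb{R}^{n}\setminus\{0\}$ and attains its maximum along a whole ray. Since $\bigtriangleup_{g}$ is a linear, locally uniformly elliptic operator without zeroth-order term and $\ln V$ is a $\bigtriangleup_{g}$-subsolution, the strong maximum principle -- applied exactly as in the proof of Lemma 2.3, using that $\mathbb{R}^{n}\setminus\{0\}$ is connected for $n\geq 2$ -- forces $\ln V\equiv\mathrm{const}$, hence $\bigtriangleup_{g}\ln V\equiv 0$. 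Feeding this back into the previous inequality, whose right-hand side is a sum of nonnegative terms, I conclude that at every point, in coordinates diagonalizing $D^{2}u$ there, $u_{iii}=0$ for all $i$ and, for each pair $i\neq j$, either $u_{jji}=0$ or else $3+\lambda_{j}^{2}+2\lambda_{i}\lambda_{j}=0$.

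The step that must go beyond Lemma 2.3 is this: under (\ref{W-Cond}), unlike under (\ref{Cond}), a coefficient $3+\lambda_{j}^{2}+2\lambda_{i}\lambda_{j}$ can genuinely vanish, so one cannot read off $D^{3}u\equiv 0$. Instead I will show $\bigtriangleup u\equiv\mathrm{const}$. Fix $i$ and a point $p$, and diagonalize $D^{2}u$ at $p$; then $\partial_{i}(\bigtriangleup u)=\sum_{j}u_{jji}$ at $p$. If no $j$ has $u_{jji}\neq 0$ this is already $0$; otherwise, for any such $j$ we have $3+\lambda_{j}^{2}+2\lambda_{i}\lambda_{j}=0$, which with (\ref{W-Cond}) in the form $3+\lambda_{i}^{2}+2\lambda_{i}\lambda_{j}\geq 0$ gives $\lambda_{i}^{2}\geq\lambda_{j}^{2}$; these two conditions pin $\lambda_{j}$ to a single value determined by $\lambda_{i}$, call it $\mu$, the \emph{same} for every such $j$. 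The linearized (minimal surface) equation (\ref{Emin}) reads $0=\bigtriangleup_{g}u_{i}=\sum_{j}(1+\lambda_{j}^{2})^{-1}u_{jji}$ at $p$; the $j=i$ term vanishes because $u_{iii}=0$, and every remaining nonzero term carries the \emph{same} weight $(1+\mu^{2})^{-1}$, so $\sum_{j}u_{jji}=0$, i.e. $\partial_{i}(\bigtriangleup u)=0$. As $i$ and $p$ were arbitrary, $\bigtriangleup u\equiv\mathrm{const}$.

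To finish, differentiating $\bigtriangleup u\equiv\mathrm{const}$ twice gives $\bigtriangleup u_{ij}=0$ for all $i,j$; each $u_{ij}$ is harmonic on $\mathbb{R}^{n}\setminus\{0\}$ and homogeneous of order $0$, hence attains its maximum along a ray, and the strong maximum principle once more yields $u_{ij}\equiv\mathrm{const}$. Thus $D^{2}u$ is constant and $u$ is a quadratic polynomial. I expect the main obstacle to be the third paragraph: the crucial observation is that the "resonant" indices $j$ (those with $u_{jji}\neq 0$) all carry one and the same eigenvalue $\mu$, which is precisely what collapses the minimal surface equation into the gradient-free statement for $\bigtriangleup u=\operatorname{tr}D^{2}u$; everything else is the homogeneity-plus-strong-maximum-principle template already used in Lemma 2.3, now run twice.
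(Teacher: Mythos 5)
Your proposal is correct and follows essentially the same route as the paper's own proof: the $\varepsilon=0$ Jacobi inequality, the strong maximum principle on the homogeneous-order-$0$ function $\ln V$, the observation that all resonant indices $j$ with $u_{jji}\neq0$ share the single admissible root $\lambda_j=-\lambda_i\pm\sqrt{\lambda_i^2-3}$ so that the minimal surface equation collapses to $\partial_i(\bigtriangleup u)=0$, and then harmonicity of the $u_{ij}$. Your explicit remarks that $u_{iii}=0$ and that the common weight $(1+\mu^2)^{-1}$ factors out merely make transparent what the paper leaves implicit.
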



\begin{thebibliography}{9999}                                                                                             %


\bibitem[BCGJ]{BCGJ}Bao, Jiguang; Chen, Jingyi; Guan, Bo; and Ji, Min,
\emph{Liouville property and regularity of a Hessian quotient equation.} Amer.
J. Math. \textbf{125} (2003), 301--316.

\bibitem[B]{B}Borisenko, Alexandr A., \emph{On a Liouville-type theorem for
the equation of special Lagrangian submanifolds.} (Russian) Mat. Zametki
\textbf{52} (1992), 22--25; English translation in Math. Notes \textbf{52}
(1992), 1094--1096 (1993).

\bibitem[CC]{CC}Caffarelli, Luis A. and Cabr\'{e}, Xavier, \emph{Fully
Nonlinear Elliptic Equations.} American Mathematical Society Colloquium
Publications, 43. American Mathematical Society, Providence, RI, 1995.

\bibitem[F-C]{F-C}Fischer-Colbrie, Doris, \emph{Some rigidity theorems for
minimal submanifolds of the sphere.} Acta Math. \textbf{145} (1980), no. 1-2, 29--46.

\bibitem[F]{F}Fu, Lei, \emph{An analogue of Bernstein's theorem.} Houston J.
Math. \textbf{24} (1998), 415--419.

\bibitem[G]{G}Gregori, Giovanni, \emph{Compactness and gradient bounds for
solutions of the mean curvature system in two independent variables.} J. Geom.
Anal. \textbf{4} (1994), no. 3, 327--360.

\bibitem[HL]{HL}Harvey, Reese and Lawson, H. Blaine. Jr., \emph{Calibrated
geometry.} Acta Math. \textbf{148} (1982), 47--157.

\bibitem[HJW]{HJW}Hildebrandt, Stefan; Jost, J\"{u}rgen; and Widman,
Kjell-Ove, \emph{Harmonic mappings and minimal submanifolds.} Invent. Math.
\textbf{62} (1980/81), no. 2, 269--298.

\bibitem[S]{S}Simon, Leon, \emph{Lectures on Geometric Measure Theory}.
Proceedings of the Centre for Mathematical Analysis, Australian National
University, 3. Australian National University, Centre for Mathematical
Analysis, Canberra, 1983.

\bibitem[Ss]{Ss}Simons, James, \emph{Minimal varieties in Riemannian
manifolds.} Ann. of Math. (2) \textbf{88} (1968) 62--105.

\bibitem[TW]{TW}Tsui, Mao-Pei and Wang, Mu-Tao, \emph{A Bernstein type result
for special Lagrangian submanifolds.} Math. Res. Lett. \textbf{9} (2002), 529--535.

\bibitem[W]{W}Wang, Mu-Tao, \emph{Interior gradient bounds for solutions to
the minimal surface system.} Amer. J. Math. \emph{126} (2004), no. 4, 921--934.

\bibitem[X]{X}Xin, Yuanlong, \emph{Minimal Submanifolds and Related Topics.}
Nankai Tracts in Mathematics, \textbf{8}, World Scientific Publishing Co.,
Inc., River Edge, NJ, 2003.

\bibitem[Y1]{Y1}Yuan, Yu, \emph{A priori estimates for solutions of fully
nonlinear special Lagrangian equations.} Ann. Inst. H. Poincar\'{e} Anal. Non
Lin\'{e}aire, \textbf{18} (2001), 261--270.

\bibitem[Y2]{Y2}Yuan, Yu, \emph{A Bernstein problem for special Lagrangian
equations.} Invent. Math. \textbf{150} (2002), 117--125.

\bibitem[Y3]{Y3}Yuan, Yu, \emph{Global solutions to special Lagrangian
equations.} Proc. Amer. Math. Soc. \textbf{134} (2006), no. 5, 1355--1358.

\bibitem[Y4]{Y4}Yuan, Yu, \emph{Notes on Special Lagrangian Equations.}
\end{thebibliography}
\end{document}